\documentclass[12pt]{amsart}
\usepackage{ifthen,verbatim}
\usepackage{graphicx}
\usepackage{mathrsfs}
\usepackage{amsmath,amssymb,amsthm}
\usepackage{tikz-cd}
\usepackage{color}
\usepackage{caption}
\usepackage{subcaption}
\numberwithin{equation}{section}
\setlength{\textwidth}{16cm}
\setlength{\oddsidemargin}{0cm}
\nonstopmode
\setlength{\evensidemargin}{0cm}
\setlength{\footskip}{40pt}

\usepackage{amsfonts}
\usepackage{amssymb}
\usepackage[english]{babel}
\usepackage[utf8x]{inputenc}
\usepackage[autostyle]{csquotes}

\usepackage{float}
\usepackage{tabularx}
\newcolumntype{C}{>{\centering\arraybackslash}X}

\usepackage{tikz-cd}
\usepackage{color}
\usepackage{caption}

\newtheorem{theorem}{Theorem}[section]
\newtheorem{corollary}{Corollary}[theorem]
\newtheorem{lemma}[theorem]{Lemma}

\newenvironment{myproof}[2] {\paragraph{\it Proof of {#1} {#2}.}}{\hfill$\square$}
\usepackage{url}
\usepackage{graphicx}
\usepackage{multirow}
\usepackage{tabularx}
\usepackage{longtable}
\usepackage{enumerate}

\usepackage{tikz-cd}

\usepackage{bm}
\usepackage{amsmath,mathtools}
\usepackage{amsfonts}



\newcommand{\RomanNumeralCaps}[1]
    {\MakeUppercase{\romannumeral #1}}

\renewcommand{\Im}{{\operatorname{Im}\,}}
\usepackage[skip=1pt,font=scriptsize]{caption}

\newenvironment{customthm}[1]
  {\innercustomthm}
  {\endinnercustomthm}

\usepackage{dirtytalk}

\usepackage{mathtools}


\begin{document}
\title
{First Derivative of Automorphic Function of Triangle Group}

\makeatletter\def\thefootnote{\@arabic\c@footnote}\makeatother

\author[Md. S. Alam]{Md. Shafiul Alam}
\address{
Department of Mathematics, University of Barishal, Barishal-8254, Bangladesh}
\email{msalam@bu.ac.bd, shafiulmt@gmail.com}
\author[B. K. Saha]{Bijan Krishna Saha}
\address{Department of Mathematics, University of Barishal, Barishal-8254, Bangladesh} 
\email{bijandumath@gmail.com}

\author[C. Podder]{Chinmayee Podder}
\address{Department of Mathematics, University of Barishal, Barishal-8254, Bangladesh} 
\email{chinmayeepodder1@gmail.com}

\keywords{Automorphic function, Triangle group, Hypergeometric function}
\subjclass[2020]{11F03; 33C05.}
\begin{abstract}
For a triangle group $G$, the $G$-automorphic function is the inverse of Schwarz triangle function. In this paper, we compute the first derivative of the $G$-automorphic function for the triangle group $G$ in terms of the Gaussian hypergeometric function.  
\end{abstract}

\maketitle

\section{Introduction}

If $\alpha, \beta, \gamma\in\mathbb{C}$ for $\gamma\neq 0,\,-1,\,-2,\,\dots$,  and $j\in\mathbb{N}\cup\{0\}$, then the function $_2F_1(\alpha, \beta; \gamma;\xi)$ defined by
\begin{align*}
    _2F_1(\alpha, \beta; \gamma;\xi)=\sum_{n=0}^{\infty}\frac{(\alpha, j) (\beta, j)}{(\gamma, j) j!}\xi^j,\quad \lvert \xi \rvert < 1,
\end{align*}
where 
\begin{equation*}
    (\alpha, j)=
    \begin{cases}
        1 & \text{if } j=0 \text{ and } \alpha \neq 0 \\
        \alpha(\alpha+1)\cdots(\alpha+j-1) & \text{if } j\geq1
    \end{cases},
\end{equation*}
is known as the Gaussian hypergeometric function.
By the Euler's integral representation formula, one can extend $_2F_1(\alpha, \beta; \gamma;\xi)$ analytically to $\mathbb{C}\setminus [1,+\infty)$ (cf. \cite[Chapter \RomanNumeralCaps{2}] {bateman}, \cite[Chapter \RomanNumeralCaps{14}]{whit}).

For the group $\operatorname{SL_2}(\mathbb{R})$ given by 
$$ 
\operatorname{SL_2}(\mathbb{R})=\Bigg\{\displaystyle\begin{pmatrix}  a & b\\ c & d \end{pmatrix}: a, b, c, d \in\mathbb{R},~\ ad-bc=1\Bigg\},
$$
the group $\operatorname{PSL_2}(\mathbb{R})$ is defined as $\operatorname{PSL_2}(\mathbb{R})=\operatorname{SL_2}(\mathbb{R})/\{I_2, -I_2\}$, where $I_2$ is the $2\times2$ identity matrix (see \cite[Chapter \RomanNumeralCaps{7}]{serre}). Let $\mathcal{H}$ denote the complex upper half-plane $\{w\in\mathbb{C}: \Im w>0\}$. The action of $\operatorname{PSL_2}(\mathbb{R})$ on $\mathcal{H}$ is given by   
  \[w\mapsto \begin{pmatrix}
   a & b\\ c & d
   \end{pmatrix}w= \frac{a w+b}{c w+d},\, \text{for}\,\begin{pmatrix}
   a & b\\ c & d
   \end{pmatrix}\in \operatorname{PSL_2}(\mathbb{R}), \, w\in\mathcal{H}.\]

Let $A=\begin{pmatrix}
   a & b\\ c & d
   \end{pmatrix}\in \operatorname{PSL_2}(\mathbb{R})$. The element $A$ is called parabolic if $a+d=\pm 2$, elliptic if $(a+d)\in\mathbb{R}$ and $|a+d|<2$, hyperbolic if $(a+d)\in\mathbb{R}$ and $|a+d|>2$, loxodromic if $(a+d)\in\mathbb{C}$ (see Theorem 15, Section 10 of \cite{ford}). Let $G\subset\operatorname{PSL_2}(\mathbb{R})$ be a triangle group with signature $(0; n_1, n_2, n_3)$ acting on $\mathcal{H}$. Consider a parabolic element $A_1\in G$ and an elliptic element $A_2\in G$. If $A_1(x)=x$, where $x\in\mathbb{R}\cup\{\infty\}$, then the point $x$ is called a cusp of the group $G$. If $A_2(w)=w$, where $w\in\mathcal{H}$, then we call the point $w$ an elliptic point of $G$. Since the action of $G$ on $\mathcal{H}$ is properly discontinuous, we obtain the quotient Riemann surface $G\backslash\mathcal{H}$ (see \cite{beardon} and \cite{Katok:fg}). In \cite{alam2} and \cite{alam1}, a special type of triangle group known as the Hecke group is investigated to study modular equations.

   For a hyperbolic triangle $[w_1, w_2, w_3]$ with angles $\frac{\pi}{n_1},\,\frac{\pi}{n_2},\,\frac{\pi}{n_3}$, let $f$ denote the Schwarz triangle function. The $G$-automorphic function $\xi:\mathcal{H}\rightarrow{G\backslash\mathcal{H}^*}$, where $\mathcal{H}^*$ is the union of $\mathcal{H}$ and the set of cusps of $G$, is the inverse of $f$ and
   \[\xi\Big(\displaystyle\frac{aw+b}{cw+d}\Big)=\xi(w)\]
   for $w\in\mathcal{H}$ and $\begin{pmatrix} a & b\\ c & d
   \end{pmatrix}\in G$. The function $\xi(w)$ can be normalized in such a way that the orders of the ramifications are $n_1, n_2, n_3$ over $0,\,1,\,\infty$, respectively (see \cite{cohen}). Suppose that $n_1,\,n_2$ and $n_3$ are the orders of the elliptic points $w_1,\,w_2$ and $w_3$, respectively, such that
\begin{align*}
    \xi(w_1)=0,\quad \xi(w_2)=1,\quad \xi(w_3)=\infty.
\end{align*}
Then, $\xi(w)$ satisfies the following hypergeometric differential equation
\begin{equation}\label{hyp}
    \xi(1-\xi)\frac{d^2g}{d\xi^2}+\{\gamma-(\alpha+\beta+1)\xi\}\frac{dg}{d\xi}-\alpha\beta g=0.
\end{equation}

Let $g_1(\xi)$ and $g_2(\xi)$ be linearly independent solutions of (\ref{hyp}), then $f(\xi)=\displaystyle \frac{g_2(\xi)}{g_1(\xi)}$ is the Schwarz triangle function. The Schwarzian derivative, denoted by $\{f,\xi\}$, of $f(\xi)$ is given by \[\{f,\xi\}=\displaystyle\Big(\frac{f''(\xi)}{f'(\xi)}\Big)'-\frac{1}{2}\Big(\frac{f''(\xi)}{f'(\xi)}\Big)^2.\]
The function $f(z)$ satisfies the Schwarzian equation given by

\begin{equation}
    \{f,\xi\}=\frac{1-p_1^2}{2\xi^2}+\frac{1-p_2^2}{2(1-\xi)^2}+\frac{1-p_1^2-p_2^2+p_3^2}{2\xi(1-\xi)},
\end{equation}
where $p_1, p_2, p_3$ are known as accessory parameters and $p_1=\frac{1}{n_1},\, p_2=\frac{1}{n_2},\, p_3=\frac{1}{n_3}$ (cf. Chap. V, Sect. 7 of \cite{nehari}). These facts are presented in the following theorem.

\begin{customthm}{A}[$\text{\cite[Theorem 6.2]{bayer}}$]
For $\gamma \ne 1$, let \[\alpha'=\alpha-\gamma+1,~\ \beta'=\beta-\gamma+1, ~\ \gamma'=2-\gamma,\] then the functions 
$$g_1(\xi)={_2F_1}(\alpha, \beta; \gamma;\xi)\quad\text{and}\quad  g_2(\xi)=\xi^{1-\gamma} {_2F_1(\alpha', \beta'; \gamma';\xi)}$$ 
are linearly independent solutions of (\ref{hyp}). Let $w=f(\xi)=\displaystyle\frac{g_2(\xi)}{g_1(\xi)}$, then $\mathcal{H}=\{\xi\in\mathbb{C}: \Im \xi>0\}$ is mapped by $f(\xi)$ conformally to the hyperbolic (non-Euclidean) triangle $[w_1, w_2, w_3]$ in the $w$-plane, where 
\begin{align*}
    &w_1=f(0)=0,\\
    &w_2=f(1)=\displaystyle\frac{\Gamma(2-\gamma)\Gamma(\gamma-\alpha)\Gamma(\gamma-\beta)}{\Gamma(\gamma)\Gamma(1-\alpha)\Gamma(1-\beta)},\\
    &w_3=f(\infty)=\displaystyle e^{\pi i(1-\gamma)} \frac{\Gamma(\alpha)\Gamma(\gamma-\beta)\Gamma(2-\beta)}{\Gamma(\gamma)\Gamma(\alpha-\gamma+1)\Gamma(1-\beta)}.
\end{align*}
The interior angles at the vertices $w_1$, $w_2$ and $w_3$ are, respectively, $(1-\gamma)\pi,\,(\gamma-\alpha-\beta)\pi$ and $(\beta-\alpha)\pi$. 
\end{customthm}

For $w_1, w_2\in\mathcal{H}$, let us denote the hyperbolic (non-Euclidean) distance between $w_1$ and $w_2$ by $d_h(w_1, w_2)$. Since at the vertices $w_1, w_2$ and $w_3$ of the hyperbolic triangle $[w_1, w_2, w_3]$, the angles are $\displaystyle\frac{\pi}{n_1}$, $\displaystyle\frac{\pi}{n_2}$ and $\displaystyle\frac{\pi}{n_3}$, respectively, we have

\begin{equation}{\label{tanh1}}
    \tanh^2{\frac{d_h(w_1,w_2)}{2}}=\displaystyle\frac{\cos{\big\{\frac{1}{2}}\big(\frac{1}{n_1}+\frac{1}{n_2}+\frac{1}{n_3}\big)\pi\big\}\cos{\big\{\frac{1}{2}}\big(\frac{1}{n_1}+\frac{1}{n_2}-\frac{1}{n_3}\big)\pi\big\}}{\cos{\big\{\frac{1}{2}}\big(\frac{1}{n_2}+\frac{1}{n_3}-\frac{1}{n_1}\big)\pi\big\}\cos{\big\{\frac{1}{2}}\big(\frac{1}{n_3}+\frac{1}{n_1}-\frac{1}{n_2}\big)\pi\big\}}
\end{equation}
(see Equation (75.1) of \cite{carath}, where $a_i=d_h(w_1, w_2)$, $\alpha_i=\frac{\pi}{n_3}$, $\alpha_j=\frac{\pi}{n_1}$, $\alpha_k=\frac{\pi}{n_2}$). Also, by Equation (85.3) of \cite{carath}, we have
\begin{equation}\label{tanh2}
    \tanh^2{\frac{d_h(w_1,w_2)}{2}}=\displaystyle \Big|\frac{w_2-w_1}{w_2-\overline{w}_1}\Big|.
\end{equation}

In the following section, we present the main results and their proofs.
\\

\section{Main Results and Proofs}

\vspace{3mm}
\begin{theorem}\label{thm1}
Suppose that $G$ is a triangle group with signature $(0;n_1,n_2,n_3)$. Let $$\xi:\mathcal{H}\rightarrow{G\backslash\mathcal{H}^*}$$ be the $G$-automorphic function and let $w_1, w_2, w_3\in\mathcal{H}$ be the elliptic points of orders $n_1,\,n_2$ and $n_3$, respectively, such that 
\begin{align*}
    \xi(w_1)=0,\quad \xi(w_2)=1,\quad \xi(w_3)=\infty.
\end{align*}
Then,
\begin{equation}\label{c&k}
    \displaystyle \frac{d\xi}{dw}=C \xi^{(1-\frac{1}{n_1})}(1-\xi)^{(1-\frac{1}{n_2})}\Big({_2F_1}(\alpha, \beta; \gamma;\xi)-K \xi^{\frac{1}{n_1}}\ _2F_1(\alpha', \beta'; \gamma';\xi)\Big)^2,
\end{equation}
where $C\in \mathbb{C}\setminus \{0\}$, $\alpha, \beta\notin\mathbb{Z}$, $\alpha'=\alpha-\gamma+1, \beta'=\beta-\gamma+1, \gamma'=2-\gamma$ and 
\begin{align*}
    K=\bigg(\displaystyle\frac{\Gamma(1-\alpha)\Gamma(1-\beta)\Gamma(\alpha')\Gamma(\beta')}{\Gamma(\alpha)\Gamma(\beta)\Gamma(1-\alpha')\Gamma(1-\beta')}\bigg)^\frac{1}{2}\frac{\Gamma(\gamma)}{\Gamma(\gamma')}.
\end{align*}

\end{theorem}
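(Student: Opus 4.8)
The idea is that $\xi$ is the inverse of the Schwarz map $w=f(\xi)$, which is a quotient of two solutions of \eqref{hyp}, so that $d\xi/dw$ is governed by a Wronskian. First I would fix the dictionary between the exponents and the signature: by Theorem A the interior angles $(1-\gamma)\pi,(\gamma-\alpha-\beta)\pi,(\beta-\alpha)\pi$ equal $\pi/n_1,\pi/n_2,\pi/n_3$, so that $1-\gamma=1/n_1$, $\gamma-\alpha-\beta=1/n_2$ and $\beta-\alpha=1/n_3$. Writing the Schwarz map as $w=u_2/u_1$ for an arbitrary basis $u_1,u_2$ of solutions of \eqref{hyp} and expanding it in the basis $g_1,g_2$ of Theorem A, I may put $u_1=d\,g_1+c\,g_2$ and $u_2=b\,g_1+a\,g_2$ with $ad-bc\neq0$. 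The inverse function theorem gives $d\xi/dw=(dw/d\xi)^{-1}$, and the quotient rule turns the numerator of $dw/d\xi$ into a Wronskian, so that $\dfrac{d\xi}{dw}=\dfrac{u_1^2}{(ad-bc)\,W(g_1,g_2)}$.

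Next I would evaluate $W(g_1,g_2)$ by Abel's identity for \eqref{hyp}. The coefficient $P$ of $g'$ in the form $g''+Pg'+Qg=0$ is $\{\gamma-(\alpha+\beta+1)\xi\}/\{\xi(1-\xi)\}$, whose partial fractions integrate to give $W(g_1,g_2)=W_0\,\xi^{-\gamma}(1-\xi)^{\gamma-\alpha-\beta-1}$; comparing the leading behaviour as $\xi\to0$, where $g_1\to1$ and $g_2\sim\xi^{1-\gamma}$, yields $W_0=1-\gamma$. Substituting $\gamma=1-1/n_1$ and $\gamma-\alpha-\beta=1/n_2$ converts the powers into $\xi^{\,1-1/n_1}$ and $(1-\xi)^{\,1-1/n_2}$; writing $u_1=d\,(g_1-K g_2)$ with $K=-c/d$ and recalling $g_2=\xi^{1/n_1}\,{_2F_1}(\alpha',\beta';\gamma';\xi)$, I obtain exactly \eqref{c&k} with $C=d^2/\{(ad-bc)(1-\gamma)\}$ a free nonzero constant. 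This portion is routine and forces the shape of \eqref{c&k}; only the value of $K$ remains.

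Determining $K=-c/d$ is the crux, since it is the one piece of data that singles out the coordinate $w$ on $\mathcal{H}$ for which $w_1,w_2,w_3$ are genuine interior elliptic points. I would fix it through the reality of the local monodromy: about $\xi=0$ the monodromy acts on $(g_1,g_2)$ as $\mathrm{diag}(1,e^{2\pi i/n_1})$, hence in the $w$-coordinate as the Möbius map with fixed points $w_1=b/d$ and $a/c$, and ellipticity in $\operatorname{PSL}_2(\mathbb{R})$ forces these to be complex conjugate; the analogous requirements hold about $\xi=1$ and $\xi=\infty$. Transporting the fixed points at $\xi=1,\infty$ to the basis at $\xi=0$ requires the Gauss connection coefficients—ratios of Gamma factors, well defined because $\alpha,\beta\notin\mathbb{Z}$—and combining these with the explicit vertices $w_2,w_3$ of Theorem A and with the two evaluations \eqref{tanh1} and \eqref{tanh2} of $\tanh^2\{d_h(w_1,w_2)/2\}$ converts the conjugacy conditions into an equation for $K$. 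The real work, and the step I expect to be the main obstacle, is the Gamma-function bookkeeping: one must see the connection coefficients assemble into the square root $\big(\frac{\Gamma(1-\alpha)\Gamma(1-\beta)\Gamma(\alpha')\Gamma(\beta')}{\Gamma(\alpha)\Gamma(\beta)\Gamma(1-\alpha')\Gamma(1-\beta')}\big)^{1/2}$ multiplied by $\Gamma(\gamma)/\Gamma(\gamma')$, track the branch phase $e^{\pi i(1-\gamma)}$ appearing in $w_3$, and check that the resulting $K$ is real—consistent with $g_1-Kg_2$ being real on $(0,1)$—once the normalization of $w$ within its $\operatorname{PSL}_2(\mathbb{R})$-orbit is fixed.
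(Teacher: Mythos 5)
Your first half is sound and coincides with the paper's argument: writing $w=u_2/u_1$, reducing $dw/d\xi$ to the Wronskian via $\frac{d\xi}{dw}=\frac{u_1^2}{(ad-bc)\,W(g_1,g_2)}$, and evaluating $W(g_1,g_2)=W_0\,\xi^{-\gamma}(1-\xi)^{\gamma-\alpha-\beta-1}$ by Abel's identity is exactly equations (\ref{t})--(\ref{w2}) of the paper (which even leaves $W_0$ as an undetermined constant $C_0$ absorbed into $C$, so your normalization $W_0=1-\gamma$ is a harmless refinement). As you say, this only forces the shape of the formula with $K=-c/d$ unspecified; the entire content of Theorem \ref{thm1} beyond this routine step is the stated value of $K$.

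That value is precisely what your proposal does not establish. You outline a strategy (reality of the local monodromies at $\xi=0,1,\infty$, transport of fixed points by connection coefficients) and then explicitly defer the decisive computation as ``the main obstacle,'' so the crux is asserted rather than derived. Moreover, the strategy as stated is under-determined: conjugating the uniformizing coordinate by an elliptic element of $\operatorname{PSL_2}(\mathbb{R})$ stabilizing $w_1$ leaves all three ellipticity conditions intact but multiplies $K$ by an arbitrary unimodular factor, so reality of the monodromy can at best pin down $\lvert K\rvert$; you acknowledge the normalization issue but do not resolve it. (To be fair, the paper shares this blind spot: it passes from (\ref{tanh2}), which involves $\bigl\lvert (w_2-w_1)/(w_2-\overline{w}_1)\bigr\rvert$, to the unsigned identity (\ref{tanh4}), which silently normalizes the side $[w_1,w_2]$ to map onto the positive real axis under $w\mapsto (w-w_1)/(w-\overline{w}_1)$.) The paper's actual route to $K$ is shorter than yours and closes the gap you left open: starting from the normalization $\frac{w-w_1}{w-\overline{w}_1}=K\,g_2/g_1$ of (\ref{k}) (quoted from \cite{elki}), let $\xi\to1$; Gauss's summation theorem evaluates $g_1(1)$ and $g_2(1)$ in closed form, giving (\ref{K}), which expresses $K$ as $\frac{w_2-w_1}{w_2-\overline{w}_1}$ times an explicit Gamma ratio; then Lemma \ref{lem1} (the hyperbolic law of cosines (\ref{tanh1}) rewritten via (\ref{abc}) as $\sin\pi\alpha\sin\pi\beta/(\sin\pi\alpha'\sin\pi\beta')$), combined with (\ref{tanh2}) and the reflection formula $\Gamma(x)\Gamma(1-x)=\pi/\sin\pi x$, evaluates that prefactor as the square root of a Gamma ratio, and substitution finishes in three lines. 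If you wish to keep your monodromy framework you must (i) fix the rotational normalization at $w_1$ explicitly and (ii) actually carry out the Gamma bookkeeping; without both, the claimed value of $K$ is unproved.
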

\vspace{5mm}

We need the following lemma in the proof of Theorem \ref{thm1}.
\begin{lemma}\label{lem1}
Let $w_1, w_2\in\mathcal{H}$ be the elliptic points of orders $n_1$ and $n_2$, respectively. Then the hyperbolic (non-Euclidean) distance, $d_h(w_1, w_2)$, between $w_1$ and $w_2$ is given by

\begin{equation*}
    \tanh^2{\frac{d_h(w_1,w_2)}{2}}=\displaystyle\frac{\sin{\pi \alpha}\sin{\pi \beta}}{\sin{\pi \alpha'}\sin{\pi \beta'}}.
\end{equation*}
\end{lemma}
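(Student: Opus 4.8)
The plan is to reduce the purely geometric expression (\ref{tanh1}) for $\tanh^2\frac{d_h(w_1,w_2)}{2}$ to the analytic form asserted in the lemma, by feeding in the angle data supplied by Theorem A. First I would record the three angle relations. Comparing the interior angles $(1-\gamma)\pi$, $(\gamma-\alpha-\beta)\pi$, $(\beta-\alpha)\pi$ at the vertices $w_1, w_2, w_3$ given in Theorem A with the prescribed angles $\frac{\pi}{n_1}, \frac{\pi}{n_2}, \frac{\pi}{n_3}$ of the hyperbolic triangle, I obtain
\begin{equation*}
    \frac{1}{n_1}=1-\gamma,\qquad \frac{1}{n_2}=\gamma-\alpha-\beta,\qquad \frac{1}{n_3}=\beta-\alpha.
\end{equation*}

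Next I would substitute these into the four cosine factors appearing in (\ref{tanh1}). A short computation of the relevant combinations gives
\begin{align*}
    &\tfrac{1}{n_1}+\tfrac{1}{n_2}+\tfrac{1}{n_3}=1-2\alpha,\qquad \tfrac{1}{n_1}+\tfrac{1}{n_2}-\tfrac{1}{n_3}=1-2\beta,\\
    &\tfrac{1}{n_2}+\tfrac{1}{n_3}-\tfrac{1}{n_1}=2(\gamma-\alpha)-1,\qquad \tfrac{1}{n_3}+\tfrac{1}{n_1}-\tfrac{1}{n_2}=2(\beta-\gamma)+1,
\end{align*}
so that, after halving and multiplying by $\pi$, the four cosine arguments are $\big(\tfrac12-\alpha\big)\pi$, $\big(\tfrac12-\beta\big)\pi$, $\big((\gamma-\alpha)-\tfrac12\big)\pi$ and $\big((\beta-\gamma)+\tfrac12\big)\pi$.

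Then I would convert each cosine into a sine using the shift identities $\cos\!\big(\tfrac{\pi}{2}-x\big)=\sin x$ and $\cos\!\big(x\pm\tfrac{\pi}{2}\big)=\mp\sin x$. The two numerator factors become $\sin\pi\alpha$ and $\sin\pi\beta$ directly. For the denominator, the first factor becomes $\sin\big((\gamma-\alpha)\pi\big)$; invoking the definition $\alpha'=\alpha-\gamma+1$, so that $\gamma-\alpha=1-\alpha'$, together with the reflection $\sin(\pi-x)=\sin x$, turns this into $\sin\pi\alpha'$, and similarly the second denominator factor becomes $\sin\pi\beta'$ via $\beta'=\beta-\gamma+1$. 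Assembling the four factors yields exactly $\dfrac{\sin\pi\alpha\,\sin\pi\beta}{\sin\pi\alpha'\,\sin\pi\beta'}$, which is the claimed identity.

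I do not anticipate a genuine obstacle here: the conceptual input is simply the correct identification of the angle parameters from Theorem A, and the remainder is a routine trigonometric simplification. The only point demanding care is the bookkeeping of the half-angle shifts by $\pm\tfrac{\pi}{2}$ and their attendant signs, which must line up so that every cosine is converted to the appropriate sine with no stray minus sign surviving in the final quotient.
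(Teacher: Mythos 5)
Your proposal is correct and follows essentially the same route as the paper: both substitute the angle identifications $\tfrac{1}{n_1}=1-\gamma$, $\tfrac{1}{n_2}=\gamma-\alpha-\beta$, $\tfrac{1}{n_3}=\beta-\alpha$ into \eqref{tanh1} and reduce the four cosines to sines, the only cosmetic difference being that you carry $\gamma-\alpha$ and $\beta-\gamma$ through the computation and convert to $\alpha'$, $\beta'$ at the end, while the paper expresses the denominator combinations directly as $1-2\alpha'$ and $1-2\beta'$. The sign bookkeeping you flag does work out exactly as you describe.
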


\begin{proof}
We have
\begin{equation}\label{n-abc}
    \frac{1}{n_1}=1-\gamma,\quad \frac{1}{n_2}=\gamma-\alpha-\beta,\quad \frac{1}{n_3}=\beta-\alpha
\end{equation}
  \[\]
and 
\begin{equation}\label{a'a}
    \alpha'=\alpha-\gamma+1,\quad \beta'=\beta-\gamma+1,\quad \gamma'=2-\gamma.
\end{equation}
From (\ref{n-abc}) and (\ref{a'a}), it follows that
\begin{equation}\label{abc}
    \alpha=\frac{1}{2}\Big(1-\frac{1}{n_1}-\frac{1}{n_2}-\frac{1}{n_3}\Big),\quad \beta=\frac{1}{2}\Big(1-\frac{1}{n_1}-\frac{1}{n_2}+\frac{1}{n_3}\Big),\quad \gamma=1-\frac{1}{n_1}
\end{equation}
and 
\begin{equation}\label{a'b'c'}
  \alpha'=\frac{1}{2}\Big(1+\frac{1}{n_1}-\frac{1}{n_2}-\frac{1}{n_3}\Big),\quad \beta'=\frac{1}{2}\Big(1+\frac{1}{n_1}-\frac{1}{n_2}+\frac{1}{n_3}\Big),\quad \gamma'=1+\frac{1}{n_1}.  
\end{equation}
Thus,
\begin{align*}
   \frac{1}{n_1}+\frac{1}{n_2}+\frac{1}{n_3}=1-2\alpha,\\
   \frac{1}{n_1}+\frac{1}{n_2}-\frac{1}{n_3}=1-2\beta,\\
   -\frac{1}{n_1}+\frac{1}{n_2}+\frac{1}{n_3}=1-2\alpha',\\
   -\frac{1}{n_1}+\frac{1}{n_2}-\frac{1}{n_3}=1-2\beta'.
\end{align*}
From (\ref{tanh1}), we have
\begin{align*}
    \tanh^2{\frac{d_h(w_1,w_2)}{2}}&=\displaystyle\frac{\cos{\big\{\frac{1}{2}}\big(1-2\alpha\big)\pi\big\}\cos{\big\{\frac{1}{2}}\big(1-2\beta\big)\pi\big\}}{\cos{\big\{\frac{1}{2}}\big(1-2\alpha'\big)\pi\big\}\cos{\big\{-\frac{1}{2}}\big(1-2\beta'\big)\pi\big\}}\\
    &=\displaystyle\frac{\sin{\pi \alpha}\sin{\pi \beta}}{\sin{\pi \alpha'}\sin{\pi \beta'}}.
\end{align*}
\end{proof} 

Recall that the linearly independent solutions of (\ref{hyp}) are $g_1(\xi)={_2F_1}(\alpha, \beta; \gamma;\xi)$ and $g_2(\xi)=\xi^{1-\gamma} {_2F_1(\alpha', \beta'; \gamma';\xi)}$ for $0<\gamma<1$. The quotient $w=f(\xi)=\displaystyle \frac{g_2(\xi)}{g_1(\xi)}$ maps conformally $\mathcal{H}$ to the interior of $[w_1, w_2, w_3]$. Let $\partial[w_1, w_2, w_3]$ and $\partial\mathcal{H}$ denote the boundary of the triangle $[w_1, w_2, w_3]$ and the boundary of $\mathcal{H}$, respectively. Then $f(\xi)$ forms a homeomorphism between $\partial\mathcal{H}=\mathbb{R}\cup \{\infty\}$ and $\partial[w_1, w_2, w_3]$. The orders of $w_1,\, w_2$ and $w_3$ are $n_1,\,n_2$ and $n_3$, respectively, where $n_1,\,n_2,\,n_3$ are integers greater than $2$ or infinity. Since \[\frac{1}{n_1}+\frac{1}{n_2}+\frac{1}{n_3}<1,\] the inverse $\xi(w)$ of the Schwarz triangle function $f(\xi)$ is single-valued (see p. 416 in \cite{sansone}). Also, $\xi(w_1)=0,\,\xi(w_2)=1$ and $\xi(w_3)=\infty$. 

Consider the elliptic point $w_1\in\mathcal{H}$ of order $n_1$ and let $A\in G$ be the generator of the stabilizer subgroup for $w_1$, then 

\begin{equation}\label{At}
    \frac{A w-w_1}{Aw-\overline{w}_1}=e^{\frac{2\pi i}{n_1}}\frac{w-w_1}{w-\overline{w}_1}
\end{equation}
(see (9) of Section 48 in \cite{ford}). Also, for $\begin{pmatrix}  a & b\\ c & d \end{pmatrix}\in G$ and $w=\frac{g_2}{g_1}$, we have 
\begin{equation}\label{tau}
    w\mapsto\begin{pmatrix}  a & b\\ c & d \end{pmatrix}w=\frac{a w+ b}{c w+ d}=\frac{a g_2+b g_1}{c g_2+ d g_1}.
\end{equation}
The two sides (hyperbolic lines) of $[w_1, w_2, w_3]$ through $w_1$ are mapped to straight lines through the origin by the M\"{o}bius transformation $$w\rightarrow \displaystyle \frac{w-w_1}{w-\overline{w}_1}.$$

\begin{myproof}{Theorem}{\ref{thm1}}
The points on $G\backslash\mathcal{H}$ are the $G$-orbits of $w$ near the elliptic point $w_1$ such that 
\begin{equation}\label{k}
    \frac{w-w_1}{w-\overline{w}_1}=K\frac{w_2}{w_1}=K\frac{\xi^{1-\gamma} {_2F_1(\alpha', \beta'; \gamma';\xi)}}{{_2F_1(\alpha, \beta; \gamma;\xi)}}
\end{equation}
for a nonzero constant $K$ (see (48) of \cite{elki}).
By assumptions, $\xi(w_2)=1$ and 
\begin{align*}
    \gamma'-\alpha'-\beta'&=\gamma-\alpha-\beta,\\
    \gamma'-\beta'&=1-\beta,\\
    \gamma'-\alpha'&=1-\alpha.
\end{align*}
Thus,
\begin{align*}
    {_2F_1(\alpha, \beta; \gamma;\xi(w_2))}=\frac{\Gamma(\gamma)\Gamma(\gamma-\alpha-\beta)}{\Gamma(\gamma-\alpha) \Gamma(\gamma-\beta)}
\end{align*}
(see Section 14.11 of \cite{bateman}) and
\begin{align*}
        {_2F_1(\alpha', \beta'; \gamma';\xi(w_2))}&=\frac{\Gamma(\gamma')\Gamma(\gamma'-\alpha'-\beta')}{\Gamma(\gamma'-\alpha') \Gamma(\gamma'-\beta')}\\
        &=\frac{\Gamma(\gamma')\Gamma(\gamma-\alpha-\beta)}{\Gamma(1-\alpha) \Gamma(1-\beta)}.
\end{align*}
Therefore, when $w \rightarrow w_2$, from (\ref{k}) we have
\begin{equation*}
   \displaystyle \frac{w_2-w_1}{w_2-\overline{w}_1} =K\frac{\Gamma(\gamma')\Gamma(\gamma-\alpha)\Gamma(\gamma-\beta)}{\Gamma(\gamma)\Gamma(1-\alpha)\Gamma(1-\beta)}
\end{equation*}
or,
\begin{equation}{\label{K}}
    K=\displaystyle \frac{w_2-w_1}{w_2-\overline{w}_1}\frac{\Gamma(\gamma)\Gamma(1-\alpha)\Gamma(1-\beta)}{\Gamma(\gamma')\Gamma(1-\alpha')\Gamma(1-\beta')}.
\end{equation}
Since $\sin{\pi \alpha=\displaystyle\frac{\pi}{\Gamma(\alpha)\Gamma(1-\alpha)}}$ for $\alpha\notin \mathbb{Z}$, by Lemma \ref{lem1} we have 
\begin{equation}{\label{tanh3}}
    \tanh^2{\frac{d_h(w_1,w_2)}{2}}
    =\displaystyle\frac{\sin{\pi \alpha}\sin{\pi \beta}}{\sin{\pi \alpha'}\sin{\pi \beta'}}=\displaystyle\frac{\Gamma(\alpha')\Gamma(1-\alpha')\Gamma(\beta')\Gamma(1-\beta')}{\Gamma(\alpha)\Gamma(1-\alpha)\Gamma(\beta)\Gamma(1-\beta)}.
\end{equation}
From (\ref{tanh2}), it follows that
\begin{equation}{\label{tanh4}}
    \tanh^2{\frac{d_h(w_1,w_2)}{2}}=\Big(\displaystyle \frac{w_2-w_1}{w_2-\overline{w}_1}\Big)^2.
\end{equation}
Thus, from (\ref{K}), (\ref{tanh3}) and (\ref{tanh4}), we have 
\begin{align*}
    K=\bigg(\displaystyle\frac{\Gamma(1-\alpha)\Gamma(1-\beta)\Gamma(\alpha')\Gamma(\beta')}{\Gamma(\alpha)\Gamma(\beta)\Gamma(1-\alpha')\Gamma(1-\beta')}\bigg)^\frac{1}{2}\frac{\Gamma(\gamma)}{\Gamma(\gamma')}.
\end{align*}

Now, (\ref{k}) implies 
\begin{equation}\label{t}
    w=\frac{w_1 g_1-K\overline{w}_1g_2}{g_1-Kg_2}.
\end{equation}
Therefore,
\begin{equation}\label{dt}
    \frac{dw}{d\xi}=K(w_1-\overline{w}_1) \frac{g_1\displaystyle\frac{dg_2}{d\xi}-\displaystyle\frac{dg_1}{d\xi}g_2}{(g_1-Kg_2)^2}.
\end{equation}
Let $W$ denote the Wronskian of $g_1$ and $g_2$ given by $$W=g_1\displaystyle\frac{dg_2}{d\xi}-\displaystyle\frac{dg_1}{d\xi}g_2.$$ 
Then, 
\begin{equation}\label{dt2}
    \frac{dw}{d\xi}=K(w_1-\overline{w}_1) \frac{W}{(g_1-Kg_2)^2}.
\end{equation}
From (\ref{hyp}), we have 
\begin{equation}\label{fu}
    \frac{d^2g}{d\xi^2}+P(\xi)\frac{dg}{d\xi}+Q(\xi)g=0,
\end{equation}
where \[P(\xi)=\displaystyle \frac{\gamma-(\alpha+\beta+1)\xi}{\xi(1-\xi)}\]and  \[Q(\xi)=\displaystyle \frac{\alpha\beta}{\xi(\xi-1)}.\] Since the linearly independent solutions of (\ref{fu}) are $g_1(\xi)$ and $g_2(\xi)$, we have by Abel's theorem (see Theorem 3.3.2 of \cite{boyce})
\begin{equation}\label{w1}
    W(\xi)=W(\xi_0) \exp{\Big(-\int\limits_{\xi_0}^\xi}P(\xi) d\xi\Big),
\end{equation}
where $W(\xi_0)\neq0$. For some constant $C_0\in \mathbb{C}\setminus \{0\}$, it follows from (\ref{w1}) that 
\begin{equation}\label{w2}
    W(\xi)=C_0 \xi^{-\gamma}(1-\xi)^{\gamma-\alpha-\beta-1}.
\end{equation}
Therefore, from (\ref{dt2}) and (\ref{w2}) we have  

\begin{equation}
    \frac{dw}{d\xi}=C_0K(w_1-\overline{w}_1) \frac{\xi^{-\gamma}(1-\xi)^{\gamma-\alpha-\beta-1}}{(g_1-Kg_2)^2},
\end{equation}
which implies
\begin{equation*}
    \displaystyle \frac{d\xi(w)}{dw}=C \xi^{(1-\frac{1}{n_1})}(1-\xi)^{(1-\frac{1}{n_2})}\Big({_2F_1}(\alpha, \beta; \gamma;\xi)-K \xi^{\frac{1}{n_1}}\ _2F_1(\alpha', \beta'; \gamma';\xi)\Big)^2,
\end{equation*}
where 
\begin{equation}\label{C}
    C=\displaystyle \frac{1}{C_0K(w_1-\overline{w}_1)}.
\end{equation}
\end{myproof}

\begin{corollary}
    The constants $C$ and $K$ in Theorem \ref{thm1} are inversely proportional to each other. 
\end{corollary}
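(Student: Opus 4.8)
The plan is to read the relationship off directly from equation (\ref{C}) obtained at the very end of the proof of Theorem \ref{thm1}, namely $C=\dfrac{1}{C_0 K (w_1-\overline{w}_1)}$. Multiplying both sides by $K$ yields
\[
CK=\frac{1}{C_0(w_1-\overline{w}_1)},
\]
so it will suffice to show that the right-hand side is a fixed nonzero constant, independent of the normalization that produces $C$ and $K$. Once this is established, the statement that $C$ and $K$ are inversely proportional is immediate.

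First I would confirm that $C_0\neq 0$. This constant is precisely the one appearing in the Wronskian formula (\ref{w2}), and it is nonzero because $g_1(\xi)$ and $g_2(\xi)$ are linearly independent solutions of the hypergeometric equation (\ref{hyp}), so their Wronskian never vanishes (Abel's theorem guarantees $W(\xi_0)\neq 0$). Next I would observe that $w_1\in\mathcal{H}$ is a fixed elliptic point of the triangle group $G$, whence $w_1-\overline{w}_1=2i\,\Im w_1$ with $\Im w_1>0$; in particular $w_1-\overline{w}_1$ is a fixed, purely imaginary, nonzero number. Therefore the product $C_0(w_1-\overline{w}_1)$ is a nonzero constant determined only by $G$ and its elliptic point $w_1$.

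Setting $\lambda=\dfrac{1}{C_0(w_1-\overline{w}_1)}$, we then have $C=\lambda/K$ with $\lambda\in\mathbb{C}\setminus\{0\}$ fixed, which is exactly the assertion that $C$ and $K$ are inversely proportional. I do not anticipate a genuine obstacle here: the entire content is already packaged in (\ref{C}), and the only point requiring a line of justification is that both factors $C_0$ and $w_1-\overline{w}_1$ are constant and nonvanishing, so that their product cannot absorb or cancel the dependence on $K$. The corollary is thus a direct reformulation of (\ref{C}).
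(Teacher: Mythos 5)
Your proposal is correct and takes essentially the same route as the paper: the paper's proof is the one-line observation that $C \propto \frac{1}{K}$ follows directly from (\ref{C}). Your additional verification that $C_0$ and $w_1-\overline{w}_1$ are fixed and nonzero (so the proportionality constant is well defined) is a reasonable elaboration of the same argument, not a different approach.
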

\begin{proof}
    From (\ref{C}), it follows that $C\propto\frac{1}{K}$.
\end{proof}

\def\cprime{$'$} \def\cprime{$'$} \def\cprime{$'$}
\providecommand{\bysame}{\leavevmode\hbox to3em{\hrulefill}\thinspace}
\providecommand{\MR}{\relax\ifhmode\unskip\space\fi MR }

\providecommand{\MRhref}[2]{%
  \href{http://www.ams.org/mathscinet-getitem?mr=#1}{#2}
}
\providecommand{\href}[2]{#2}

\end{document}